\begin{document}

\setlength{\marginparwidth}{3cm}
\newcommand{\Rcomment}[1]{\Rmodif\marginpar{\tiny\begin{center}\textcolor{blue}{#1}\end{center}}} 
\newcommand{\Vcomment}[1]{\Vmodif\marginpar{\tiny\begin{center}\textcolor{red}{#1}\end{center}}}
\newcommand{\Vmodif}{\ensuremath{\textcolor{red}{\clubsuit}}}
\newcommand{\Rmodif}{$\textcolor{blue}{\spadesuit}$}
\newcommand{\ra}{\rightarrow}
\newcommand{\onto}{\twoheadrightarrow}
\newcommand{\m}{^{-1}}
\newcommand{\grp}[1]{{\langle #1 \rangle}}
\newcommand{\Stab}{\mathrm{Stab}}
\newcommand{\nsub}{\triangleleft}
\newcommand{\semidirect}{\ltimes}
\newcommand{\FR}{\ensuremath{(\mathrm{F}\mathbb{R})}}
\newcommand{\bbR}{\mathbb{R}}

\title{Automorphisms and endomorphisms of lacunary hyperbolic groups}
\author{Rémi Coulon, Vincent Guirardel}

\maketitle
\begin{abstract}
	In this article we study automorphisms and endomorphisms of lacunary hyperbolic groups.
        We prove that every lacunary hyperbolic group is Hopfian, answering a question by Henry Wilton.
	In addition, 
        we show that if a lacunary hyperbolic group has the fixed point property for actions on $\R$-trees, 
        then it is co-Hopfian and
        its outer automorphism group is locally finite. 
        We also construct lacunary hyperbolic groups whose automorphism group
        is infinite, locally finite, and contains any locally finite group given in advance.
\end{abstract}

\tableofcontents

%
\section{Introduction}
%
\label{sec: introduction}

The asymptotic cones of a finitely generated group $G$ are metric spaces introduced by M.~Gromov in his seminal paper on groups with polynomial growth \cite{Gromov:1981ve}.
They are metric spaces obtained as a limit of rescalings of the word metric on $G$, so that the asymptotic cones provide a view of $G$ obtained by ``zooming out to infinity'' on $G$.
A group usually has several asymptotic cones (depending on the choice of rescaling parameters or ultra-filter).
They capture the large scale geometry of $G$.
For instance, a group is hyperbolic if and only if all its asymptotic cones are $\R$-trees \cite{Gromov:1993vu}.
A.Y.~Ol'shanksi\u\i, D.~Osin and M.~Sapir used this idea to define a new notion of negative curvature that extends the class of hyperbolic groups \cite{OlcOsiSap09}.
A finitely generated group $G$ is called \emph{lacunary hyperbolic} if \emph{one} of its asymptotic cone is an $\R$-tree.

\medskip
Alternatively, a lacunary hyperbolic group $G$ can be described algebraically as a direct limit of $\delta_k$-hyperbolic groups $G_k$
with epimorphisms $G_k\ra G$ injective on a ball of radius $R_k$, so that the ratio $R_k/\delta_k$ goes to infinity \cite{OlcOsiSap09}.
This implies in particular that every \emph{finitely presented} lacunary hyperbolic group is actually hyperbolic.
However the class of lacunary hyperbolic groups is much larger: it is an uncountable family of finitely generated groups which are quite diverse and contains some rather exotic infinite groups.
It contains some infinite torsion groups, some non-abelian groups whose proper subgroups are all cyclic, some elementary amenable groups having at least two non-homeomorphic asymptotic cones, and many groups that served as counterexamples to various questions.

\medskip
In this paper we study endomorphisms and automorphisms of lacunary hyperbolic groups.
Although the class of lacunary hyperbolic groups is very large and wild, we manage to bring to light some general properties of their outer automorphism groups.
If $G$ is a hyperbolic group such that $\out G$ is infinite, then the Bestvina-Paulin construction provides a non-trivial action of $G$ on an $\bbR$-tree \cite{Paulin:1991fx}. 
Conversely, this says that if a hyperbolic group $G$ has property $\FR$ (i.e. every action of $G$ on an $\bbR$-tree has a fixed point), then $\out G$ is finite.
This does not hold for general lacunary hyperbolic groups (see below).
However, the following result is reminiscent from this fact.

\begin{mthm}[see \autoref{res: out lacunary locally finite}]
	Let $G$ be a lacunary hyperbolic group with property $\FR$.
	Then $\out G$ is locally finite, i.e. every finitely generated subgroup of $\out G$ is finite.
\end{mthm}

On the other hand, $\out G$ need not be finite.
Moreover, there is no restriction on the possible isomorphism type of locally finite groups occurring in $\out G$ (with the only obvious restriction that it should be countable):

\begin{mthm}[see \autoref{res: embedding locally finite group}]
	Let $A$ be a countable, locally finite group.
	
	Then there exists a lacunary hyperbolic group $G$ with property $\FR$, such that $\out G$ is locally finite 
and contains a subgroup isomorphic to $A$.

	Moreover, given any non-elementary hyperbolic group $H$, one can take $G$ to be a quotient of $H$.
\end{mthm}

\begin{rema}
	In particular, if $A$ is infinite, then $\out G$ is not finitely generated.
\end{rema}

The group $G$ is constructed using iterated small cancellation in hyperbolic groups, where at each step one ensures that
the obtained quotient has more and more automorphisms according to $A$.

\medskip
In general, we exhibit a stratification of $\out G$ related to the fact that $G$ is a controlled direct limit of hyperbolic groups (\autoref{res: lifting automorphisms}).
This has some consequences on the collection of stretching factors of automorphisms of $G$.
Fix a finite generating set $S$ of $G$, and $|\cdot |_S$ the corresponding word metric.
Define the \emph{norm} or \emph{stretching factor} $||\Phi||$ of $\Phi\in \out G$ as
\begin{displaymath}
	\norm\Phi=\inf_{g\in G}\ \max_{s\in S} \ \abs[S]{g \phi(s)g\m}
\end{displaymath}
where $ \phi\in \aut G$ is any representative of $\Phi$.
We then define the \emph{spectrum} $\Lambda\subset \bbR_+^*$ of $\out G$ as the set
of all stretching factors of all elements $\Phi\in\out G$.

\medskip
We prove that if $G$ has property $\FR$\ and is approximated sufficiently fast by hyperbolic groups 
then there are large gaps in the spectrum $\Lambda$.
For instance, if $R_k\geq C \delta_k^\alpha$ for some $\alpha> 2$ with the notations above, 
then there is a sequence of large  intervals $I_i=(a_i,b_i)$ in the complement of $\Lambda$, 
with $a_i$ going to infinity, and whose lengths satisfy $|b_i-a_i|\geq C' a_i^{\alpha-1}$.
Furthermore, if $\alpha>3$, then for all $i$ large enough, 
the set of automorphisms such that $\max \{|\norm\Phi,\norm{\Phi\m}\}\leq a_i$ forms a (finite) subgroup of $\out G$.
See \autoref{sec: spectrum} for more precise statements.

\medskip
Unfortunately we are not able to describe the structure of the outer automorphism group of a lacunary hyperbolic group $G$ without property $\FR$.
One reason is that although we get an action on an $\bbR$-tree, our method does not give any control on arc stabilizers, which prevents us from applying Rips theory.

\bigskip
The other part of this work deals with endomorphisms, and specifically with the Hopf and co-Hopf properties.
Recall that a group $G$ is \emph{Hopfian} if every surjective endomorphism of $G$ is an isomorphism.
Z.~Sela proved that every torsion-free hyperbolic group is Hopfian \cite{Sela:1999ha}.
This result was extended by C.~Reinfeldt and R.~Weidmann to any hyperbolic group \cite{Reinfeldt:xSP4l2Mt}.

\medskip
In \cite{Wilton:2011tp}, Wilton asks about the existence of non-hopfian lacunary hyperbolic groups.
As pointed out by Cornulier in \cite{Wilton:2011tp}, there exists a direct limit of hyperbolic groups which is non-hopfian.
A more elaborate construction by Ivanov-Storozhev yields torsion-free examples that are even free groups in an appropriate variety \cite{Ivanov:2005du}.
Maybe surprisingly, the following result answers Wilton's question by the negative.

\begin{mthm}[see \autoref{res: hopf lacunary}]
	Every lacunary hyperbolic group $G$ is Hopfian.
\end{mthm}

Being Hopfian is a consequence of being {equationally Noetherian}.
Recall that a group $G$ is \emph{equationally Noetherian} if any system of equations
has the same set of solutions as a finite subsystem.
Most of the groups that are known to be Hopfian are in fact equationally Noetherian.
Our result provides example of Hopfian groups that are not equationally Noetherian (see \autoref{res: EN}).


\medskip
Dually, a group $G$ is \emph{co-Hopfian} if every injective endomorphism of $G$ is an isomorphism.
A hyperbolic group is co-Hopfian as soon as it does not split as an amalgamated product or an HNN-extension over a finite group
\cite{Sela:1999ha}, see \cite{Moioli:2013uk} in presence of torsion.
For the class of lacunary hyperbolic groups we have the following statement.

\begin{mthm}[see \autoref{res: cohopf lacunary}]
	Every lacunary hyperbolic group with property $\FR$ is co-Hopfian.
\end{mthm}

Most of the results above rely on the following important observation.
Write $G$ as a direct limit of hyperbolic groups $G_k$ as above.
In any direct limit of finitely presented groups, any endomorphism $\phi\colon G\ra G$ can be lifted 
to a morphism $G_k\ra G_l$ for all $k$ and all $l$ larger than some $l_k$.
Here, we claim that $\phi$ can be lifted to an endomorphism $G_k\ra G_k$ for $k$ large enough.
This is proved using the fact that, being $\delta_k$-hyperbolic, $G_k$ admits a presentation whose relations have length at most $10\delta_k$,
and using that the radius of injectivity of $G_k\ra G$ grows faster than $\delta_k$.

\paragraph{Acknowledgement.}
We would like to warmly thank Henry Wilton for informing us about the question he asked in \cite{Wilton:2011tp}.
The second author acknowledge support from ANR-11-BS01-013 and from the Institut universitaire de France.

%
\section{Lacunary hyperbolic groups.}
%
\label{sec: preliminaries}

Recall that a geodesic metric space is \emph{$\delta$-hyperbolic} if its triangles are $\delta$-thin, i.e. each side is contained in the $\delta$-neighborhood of the union of the other two.
Let $G$  be  a group generated by a finite set $S$.
Then $(G, S)$ is \emph{$\delta$-hyperbolic} if the Cayley graph of $G$ with respect to $S$ is $\delta$-hyperbolic.
A subgroup $H$ of $G$ is called \emph{elementary} if it is virtually cyclic.

\medskip
As we explained in the introduction, A.Y.~Ol'shanksi\u\i, D.~Osin and M.~Sapir defined lacunary hyperbolic groups using asymptotic cones \cite[Section 3.1]{OlcOsiSap09}.
They also provide a complete characterization of such groups in terms of limit of hyperbolic groups.
This is the point of view that we will adopt here.

\begin{defi}
\label{def: injectivity radius}
	Let $G$ be a finitely generated group (endowed with the word metric for a given finite generating set).
	Let $\alpha \colon G \rightarrow G'$ be a homomorphism.
	The \emph{injectivity radius}  of $\alpha$ is the largest integer $\inj\alpha$ such that $\alpha$ restricted to every ball of radius $\inj\alpha$ is one-to-one
\end{defi}

We will write $\alpha:(G,S)\ra(G',S')$ for a morphism $G\ra G'$ sending $S$ to $S'$.

\begin{theo}[{Ol'shanski\u\i-Osin-Sapir \cite[Theorem 3.3]{OlcOsiSap09}}]
\label{res: caracterization lh groups}
	A finitely generated group $G$ is lacunary hyperbolic if and only if for some generating set $S$, one can write $(G,S)$ as the direct limit of a sequence of finitely generated groups $(G_k,S_k)$ and epimorphisms	
	\begin{displaymath}
		(G_0,S_0) \xrightarrow{\alpha_0} (G_1,S_1) \xrightarrow{\alpha_1} \ \cdots\  \ra 
(G_k,S_k) \xrightarrow{\alpha_k}  (G_{k+1},S_{k+1}) \ra 
\ \cdots
	\end{displaymath}
	such that the following holds. 
	For every $k \in \N$, we endow $G_k$  with the word metric with respect to $S_k$, and we denote by $\eta_k\colon(G_k,S_k)\ra (G,S)$ the natural epimorphism.
	Then 
	\begin{enumerate} 
		\item for every $k \in \N$, $(G_k,S_k)$ is $\delta_k$-hyperbolic.
		\item $\displaystyle \lim_{k \rightarrow + \infty} \frac{ \inj{\eta_k} }{\delta_k}= \infty$. 
	\end{enumerate}
\end{theo}

\begin{rema}
  In \cite{OlcOsiSap09}, the statement only says that $\inj{\alpha_k}/\delta_k$ tends to infinity, but the proof really gives the statement above: 
  the groups $G_k$ constructed have a ball of radius $d_k$ that embeds in $G$, and are $\delta_k$-hyperbolic with $d_k/\delta_k$ diverging to infinity. 
  One could also formally deduce the statement above from the one in \cite{OlcOsiSap09}, but the exercise does not seem worth doing.
\end{rema}

%
\section{Structure of the automorphism group}
%
\label{sec: structure out}

Let $G$ be a group generated by a finite set $S$.
We endow $G$ with the word metric $\dist[S]\cdot\cdot$ with respect to $S$, and we denote by
$\abs[S]g=\dist[S]g1$ the corresponding word length.
For every $g \in G$ we write $\iota_g$ for the inner automorphisms that sends $x$ to $gxg^{-1}$.
\begin{defi}
\label{def: norm endomorphism}
	Let $\phi \colon G \rightarrow G$ be an endomorphism of $G$.
	We define two norms  
	\begin{eqnarray*}
		\abs \phi & = & \max_{s \in S}\abs[S]{\phi(s)}\\ 
		\norm \phi & = & \inf_{g \in G} \max_{s \in S} \dist[S]{\phi(s)g}g = \inf_{g \in G} \abs{\iota_{g} \circ \phi}.
	\end{eqnarray*}
\end{defi}

These norms obviously depend on the generating set $S$, but we omit it from the notation as the implicit generating set will be obvious from the context.
Note that for all $g\in G$, we have $\abs[S]{\phi(g)}\leq \abs\phi \cdot \abs[S]g$.

\medskip
We denote by $\smorph G$ the semigroup of all endomorphisms of $G$.
Since balls in $G$ are finite, for all $N\geq 0$, there are only finitely many endomorphisms $\phi$ such that $|\phi|\leq N$.
We define the quotient semi-group $\osmorph G$ of \emph{outer endomorphisms} by identifying two endomophisms $\phi_1,\phi_2$ if there exists $g\in G$ such that $\phi_1 = \iota_g \circ \phi_2$.
Given $\Phi \in \osmorph G$, we define  $\norm \Phi = \norm \phi$ where $\phi\in \smorph G$ is any representative of $\phi$.
For every $N \geq 0$, there are only finitely many $\Phi \in \osmorph G$ such that $\norm \Phi \leq N$.

%
\subsection{Lifting morphisms}
%
\label{sec: lifting}

Let $G$ be a lacunary hyperbolic group.
Using \autoref{res: caracterization lh groups}, we write $G$ as a direct limit of groups 
\begin{displaymath}
	(G_0,S_0) \rightarrow (G_1,S_1) \ra \dots \ra  (G_k,S_k) \ra  (G_{k+1},S_{k+1}) \ra  \dots
\end{displaymath}
where $(G_k,S_k)$ is $\delta_k$-hyperbolic, and 
\begin{displaymath}
	\lim_{ k \rightarrow + \infty} \frac {\inj{\eta_k}}{\delta_k} = \infty.
\end{displaymath}
where we denote by $\pi_k \colon G_0\ra G_k$ and by $\eta_k\colon G_k\ra G$ the natural epimorphisms.
We endow $\smorph G$, $\osmorph G$, $\smorph{G_k}$, $\osmorph{G_k}$ with the norms $|\cdot|$ and $\norm \cdot$ associated with the generating sets $S$ and $S_k$ respectively.

\medskip
Without loss of generality we can assume that $G_0$ is a free group and $S_0$ a basis of $G_0$.
In particular, for any endomorphism $\phi$ of $G$, there exists an endomorphism $\Tilde\phi_0$ of $G_0$ that lifts $\phi$.
Moreover we can choose $\Tilde\phi_0$ such that $|\Tilde\phi_0|=|\phi|$. 
Note however that we cannot assume that $\Tilde \phi_0$ is an automorphism even if $\phi$ is.

\begin{lemm}\label{res: lem_lift}
	Let $\phi\in \smorph G$ and let $\Tilde\phi_0$ be any lift with $|\Tilde\phi_0|=|\phi|$.
	Consider any index $k$ such that $\inj{\eta_k}/ \delta_k > 10 \abs {\phi}$.
	Then $\Tilde\phi_0$ induces an endomorphism $\Tilde\phi_k\in \smorph {G_k}$ that lifts $\phi$, i.e. such that the following diagram commutes:
	\begin{displaymath}
	\label{dia: liftable from step n}
		\begin{tikzpicture}[description/.style={fill=white,inner sep=2pt},baseline=(current bounding box.center)] 
			\matrix (m) [matrix of math nodes, row sep=2em, column sep=2.5em, text height=1.5ex, text depth=0.25ex] 
			{ 
				G_0	& G_k & G	\\
				G_0	& G_k & G	\\
			}; 
			\draw[>=stealth, ->] (m-1-1) -- (m-1-2) node[pos=0.5, above]{$\pi_k$};
			\draw[>=stealth, ->] (m-1-2) -- (m-1-3) node[pos=0.5, above]{$\eta_k$};
			
			\draw[>=stealth, ->] (m-2-1) -- (m-2-2) node[pos=0.5, below]{$\pi_k$};
			\draw[>=stealth, ->] (m-2-2) -- (m-2-3) node[pos=0.5, above]{$\eta_k$};
			
			\draw[>=stealth, ->] (m-1-1) -- (m-2-1) node[pos=0.5, left]{$\Tilde\phi_0$};
			\draw[>=stealth, ->] (m-1-2) -- (m-2-2) node[pos=0.5, left]{$\Tilde\phi_k$};
			\draw[>=stealth, ->] (m-1-3) -- (m-2-3) node[pos=0.5, right]{$\phi$};
		\end{tikzpicture} 
	\end{displaymath}
\end{lemm}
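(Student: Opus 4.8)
The plan is to show that the free-group lift $\Tilde\phi_0$ preserves the kernel $N_k=\ker\pi_k$, so that it descends to the quotient $G_k=G_0/N_k$; the map $\Tilde\phi_k$ is then the induced endomorphism, and commutativity of the left square $\Tilde\phi_k\circ\pi_k=\pi_k\circ\Tilde\phi_0$ holds by construction. First I would record the only nontrivial external input: since $(G_k,S_k)$ is $\delta_k$-hyperbolic, it admits a presentation $G_k=\grp{S_k\mid R_k}$ in which every relator $r\in R_k$ has length $\abs[S_k]r\leq 10\delta_k$. Identifying $S_0$ with $S_k$ through the generator-to-generator maps, I can read each $r\in R_k$ as a word in $G_0$ of length at most $10\delta_k$, and then $N_k$ is exactly the normal closure of $R_k$ in $G_0$. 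Because $\Tilde\phi_0$ is an endomorphism and $N_k$ is normal, to prove $\Tilde\phi_0(N_k)\subseteq N_k$ it suffices to check that $\pi_k(\Tilde\phi_0(r))=1$ in $G_k$ for every single relator $r\in R_k$.

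Next I would combine a length bound with the injectivity radius. Fix $r\in R_k$. Since $r\in N_k=\ker\pi_k$ we have $\eta_0(r)=\eta_k(\pi_k(r))=1$ in $G$, and therefore, using that $\Tilde\phi_0$ lifts $\phi$ (i.e.\ $\eta_0\circ\Tilde\phi_0=\phi\circ\eta_0$),
\[
\eta_k\bigl(\pi_k(\Tilde\phi_0(r))\bigr)=\eta_0(\Tilde\phi_0(r))=\phi(\eta_0(r))=1 .
\]
So the element $g:=\pi_k(\Tilde\phi_0(r))\in G_k$ lies in $\ker\eta_k$; it remains to bound its length. Using the submultiplicativity $\abs[S_0]{\Tilde\phi_0(w)}\leq\abs{\Tilde\phi_0}\cdot\abs[S_0]w$ recorded after \autoref{def: norm endomorphism}, together with $\abs{\Tilde\phi_0}=\abs\phi$, the bound $\abs[S_0]r\leq 10\delta_k$, and the fact that $\pi_k$ is $1$-Lipschitz, I get
\[
\abs[S_k]g=\abs[S_k]{\pi_k(\Tilde\phi_0(r))}\leq\abs[S_0]{\Tilde\phi_0(r)}\leq\abs{\Tilde\phi_0}\cdot\abs[S_0]r\leq 10\,\abs\phi\,\delta_k .
\]
By hypothesis $\inj{\eta_k}/\delta_k>10\abs\phi$, hence $\abs[S_k]g<\inj{\eta_k}$, so both $g$ and $1$ lie in the ball of radius $\inj{\eta_k}$ centred at $1$, on which $\eta_k$ is injective by \autoref{def: injectivity radius}. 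As $\eta_k(g)=1=\eta_k(1)$, this forces $g=1$, that is $\pi_k(\Tilde\phi_0(r))=1$, as wanted.

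This gives $\Tilde\phi_0(N_k)\subseteq N_k$, hence the induced endomorphism $\Tilde\phi_k\in\smorph{G_k}$ with $\Tilde\phi_k\circ\pi_k=\pi_k\circ\Tilde\phi_0$. Commutativity of the right square follows formally from surjectivity of $\pi_k$: precomposing $\eta_k\circ\Tilde\phi_k$ and $\phi\circ\eta_k$ with $\pi_k$ yields $\eta_k\pi_k\Tilde\phi_0=\eta_0\Tilde\phi_0=\phi\eta_0=\phi\eta_k\pi_k$ on both sides, and since $\pi_k$ is an epimorphism we may cancel it. The one genuinely substantive ingredient — and the step I would treat most carefully — is the short-relator presentation of the $\delta_k$-hyperbolic group $G_k$: it is precisely the bound $\abs[S_k]r\leq 10\delta_k$ that makes the length estimate land below the injectivity threshold $\inj{\eta_k}$, while everything else is a diagram chase. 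I would therefore cite (or prove) the exact form of this classical fact, since the constant $10$ is what must be matched against the hypothesis $\inj{\eta_k}/\delta_k>10\abs\phi$.
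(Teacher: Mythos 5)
Your proof is correct and follows the same route as the paper: both reduce to checking that each relator of the short-relator presentation of the $\delta_k$-hyperbolic group $G_k$ (length at most $10\delta_k$) is sent by $\pi_k\circ\Tilde\phi_0$ into $\ker\eta_k$ with word length below $\inj{\eta_k}$, forcing it to be trivial. The key external input you flag is exactly the one the paper cites (Coornaert--Delzant--Papadopoulos, Chapitre 5, Proposition 1.1), so there is nothing to add.
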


\begin{proof}
	Since $(G_k,S_k)$ is $\delta_k$-hyperbolic, the kernel of $\pi_k \colon G_0\ra G_k$ is normally generated by a finite subset $P_k$ of $G_0$ whose elements have length at most $10\delta_k$ \cite[Chapitre 5, Proposition 1.1]{CooDelPap90}.
	We claim that for all $g\in P_k$, $\pi_k(\Tilde\phi_0(g))=1$.
	The claim  implies that $\Tilde\phi_0$ induces on the quotient an endomorphism $\Tilde \phi_k\in \smorph{G_k}$ which lifts $\phi$ because $\Tilde\phi_0$ does.
	Let us prove the claim.   
	Since $\Tilde\phi_0$ is a lift of $\phi$,  $\Tilde\phi_0(g)$ has trivial image in $G$. This means that $\pi_k(\Tilde\phi_0(g))$ belongs to $\ker\eta_k$.
 	Therefore, to prove that it is trivial, it suffices to prove that $\abs[S_k]{\pi_k(\Tilde\phi_0(g))}<\inj{\eta_k}$.
	Now since $\abs[S_0]g\leq 10\delta_k$ and $\pi_k$ is $1$-Lipschitz, we can conclude that
	\begin{displaymath}
		\abs[S_k]{\pi_k(\Tilde\phi_0(g))}\leq \abs[S_0]{\Tilde \phi_0(g)}\leq \abs{\Tilde\phi_0}\cdot \abs[S_0]g\leq 10\abs{\Tilde\phi_0}\delta_k < \inj{\eta_k}. \qedhere
	\end{displaymath}
\end{proof}

\begin{defi}
\label{def: endom liftable from step n}
	Let $n \in \N$.
	Let $\phi \in \smorph G$.
	We say that $\phi$ \emph{coherently lifts from step $n$} if there exists a lift  $\Tilde \phi_0\in \smorph{G_0}$ of $\phi$ such that for every $k \geq n$, $\Tilde \phi_0$ induces on $G_k$ an endomorphism $\Tilde\phi_k\in\smorph{G_k}$ (necessarily a lift of $\phi$).
	We denote by  $\smorphi nG$ the set of endomorphisms of $G$ which coherently lift from step $n$.
\end{defi} 

\begin{rema}\label{rem_1lip}
This definition implicitly depends on the choice of the approximating sequence given by \autoref{res: caracterization lh groups}.
 We note for future use that if $\Tilde \phi_k\colon G_k\ra G_k$ is any lift of $\phi\colon G\ra G$, then $||\Tilde\phi_k||\geq ||\phi||$.
\end{rema}

By definition, $(\smorphi nG)_{n \in \N}$ is an increasing sequence of semi-groups.
Note that for every $n\in \N$, all  inner automorphisms of $G$ are in $\smorphi nG$.
Hence we can define $\osmorphi nG\subset \osmorph G$ as the quotient of $\smorphi nG$ modulo inner automorphisms.
It follows that $(\osmorphi nG)_{n \in \N}$ is again an increasing sequence of semi-groups.

\begin{coro}
\label{res: lifting automorphisms}
	For every endomorphism $\phi$ of $G$, there exists an integer $n$ such that $\phi$ coherently lifts from step $n$.
	In other words,
	\begin{displaymath}
		\smorph G = \bigcup_{n \in \N} \smorphi nG \quad \text{and} \quad \osmorph G = \bigcup_{n \in \N} \osmorphi nG.
	\end{displaymath}
\end{coro}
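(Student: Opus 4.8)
The plan is to deduce this directly from \autoref{res: lem_lift}, the only real point being to check that a \emph{single} lift $\Tilde\phi_0$ can be made to work simultaneously for all large indices $k$. First, since $G_0$ is free with basis $S_0$, I fix a lift $\Tilde\phi_0\in\smorph{G_0}$ of $\phi$ with $|\Tilde\phi_0|=|\phi|$, exactly as recorded just before \autoref{res: lem_lift}. The norm $|\phi|$ is then a fixed finite number, so all the remaining content lies in controlling the indices.

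Next I invoke hypothesis (2) of \autoref{res: caracterization lh groups}: since $\inj{\eta_k}/\delta_k\to\infty$, there exists an integer $n$ such that $\inj{\eta_k}/\delta_k>10|\phi|$ for every $k\geq n$. For each such $k$, \autoref{res: lem_lift} applies verbatim to the fixed lift $\Tilde\phi_0$ and the index $k$, and hence guarantees that $\Tilde\phi_0$ induces an endomorphism $\Tilde\phi_k\in\smorph{G_k}$ lifting $\phi$. Because one and the same $\Tilde\phi_0$ serves for all $k\geq n$, this is precisely the statement that $\phi$ coherently lifts from step $n$ in the sense of \autoref{def: endom liftable from step n}, i.e. $\phi\in\smorphi nG$.

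This yields $\smorph G=\bigcup_{n\in\N}\smorphi nG$: every endomorphism lies in some $\smorphi nG$ by the above, and the reverse inclusion is immediate from the definition. The outer version then follows by pushing this equality through the quotient map $\smorph G\onto\osmorph G$, since $\osmorphi nG$ is by construction the image of $\smorphi nG$ and images commute with unions. I do not anticipate any genuine obstacle here: the geometric heart of the argument — turning the length bound $10|\Tilde\phi_0|\delta_k<\inj{\eta_k}$ into the descent of $\Tilde\phi_0$ across $\ker\pi_k$ — has already been carried out in \autoref{res: lem_lift}. The one subtlety worth flagging is that one must commit to the lift $\Tilde\phi_0$ \emph{before} choosing $n$, rather than letting the lift vary with $k$; this is legitimate precisely because the hypothesis on $k$ in \autoref{res: lem_lift} depends on $\phi$ only through the quantity $|\phi|=|\Tilde\phi_0|$, which is itself independent of $k$.
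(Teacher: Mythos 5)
Your proof is correct and follows essentially the same route as the paper: choose $n$ so that $\inj{\eta_k}/\delta_k>10|\phi|$ for all $k\geq n$ and apply \autoref{res: lem_lift} with a single fixed lift $\Tilde\phi_0$. The subtlety you flag — committing to one lift before choosing $n$, which is legitimate because the hypothesis depends only on $|\phi|$ — is exactly the point the paper's one-line proof relies on implicitly.
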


\begin{proof}
Since $\inj{\eta_k}/\delta_k$ tends to infinity, there exits $n$ such that for all $k\geq n$, $\inj{\eta_k}/ \delta_k > 10 \abs {\phi}$ so that \autoref{res: lem_lift} applies.
\end{proof}

\subsection{Actions on trees}
\label{sec: actions on trees}

\begin{defi}
\label{def: minimal displacement}
	Let $(X,d)$ be a metric space.
	Let $G$ be a group generated by a finite set $S$ and acting by isometries on $X$.
	We denote by $\lambda(G,X)$ the \emph{minimal displacement} of $G$ on $X$, defined by
	\begin{displaymath}
		\lambda (G,X) = \inf_{x \in X} \max_{s \in S} \dist{sx}x.
	\end{displaymath}
\end{defi}

\begin{defi}
\label{def: property FR}
	A group $G$ has the property \FR, if every action of $G$ on an $\R$-tree has a global fixed point.
\end{defi}

\medskip
For the remainder of this section, $G$ is a lacunary hyperbolic group generated by a finite set $S$ and $(G_k, S_k)$ is the sequence given by \autoref{res: caracterization lh groups}.
In particular $\delta_k$ stands for the hyperbolicity constant of $(G_k, S_k)$. 
The next proposition is a classical variation of the Bestvina-Paulin construction (note that in our statement, the group $G_{k_i}$ acting on the space $X_i$ depends on $i$).

\begin{prop}[\cite{Paulin:1991fx}]
\label{res: bestvina-paulin prelim}
	Consider a sequence of indices $k_i\ra \infty$, and for each $i\geq 0$, consider an isometric action of $G_{k_i}$ on a $1$-hyperbolic space $X_i$ such that $\lim_{i\ra\infty}\lambda(G_{k_i}, X_i)= \infty$.
	Then $G$ acts by isometries without a global fixed point on an $\R$-tree. \qed
\end{prop}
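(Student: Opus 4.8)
The plan is to run the Bestvina--Paulin limiting construction, rescaling the given actions so that their hyperbolicity constants tend to $0$ while the minimal displacement stays equal to $1$, and then to use the direct limit structure of $G$ to push the resulting action from the approximating groups $G_{k_i}$ down to $G$ itself. First I would normalize: let $d_i$ denote the metric on $X_i$, set $\lambda_i=\lambda(G_{k_i},X_i)$, and rescale to $X_i'=(X_i,d_i/\lambda_i)$. Each $X_i'$ is geodesic and $(1/\lambda_i)$-hyperbolic, and since $\lambda_i\ra\infty$ we have $1/\lambda_i\ra 0$; moreover the rescaled minimal displacement $\lambda(G_{k_i},X_i')$ is exactly $1$. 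Using the definition of the infimum I would pick basepoints $x_i\in X_i$ with $\max_{s\in S_{k_i}} d_i(sx_i,x_i)\leq \lambda_i+1$, so that in the rescaled metric every generator moves $x_i$ by at most $1+1/\lambda_i$, a uniformly bounded amount.

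Next, fix a non-principal ultrafilter $\omega$ and form the pointed ultralimit $X_\omega=\lim_\omega(X_i',x_i)$, with metric $d_\omega$. As an ultralimit of geodesic spaces it is geodesic, and as an ultralimit of $(1/\lambda_i)$-hyperbolic spaces with $1/\lambda_i\ra 0$ it is $0$-hyperbolic; hence $X_\omega$ is an $\R$-tree. Every $s\in S$ corresponds to a generator of $G_{k_i}$ (still written $s$) acting isometrically on $X_i'$ and displacing $x_i$ boundedly, so it determines an isometry of $X_\omega$; this assignment extends to an action of the free group $F(S)$ on $X_\omega$ by isometries.

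The crux --- and the only place where the hypotheses specific to lacunary hyperbolic groups enter --- is to show that this $F(S)$-action factors through $G$. Here I would use that $G=\varinjlim_k G_k$ with each $\eta_k$ respecting generating sets. If a word $w$ in $S$ represents the trivial element of $G$, then, lifting it to the free group $G_0$ as $w_0$, triviality in $G$ means $w_0\in\ker(G_0\ra G)=\bigcup_k\ker\pi_k$, so $\pi_k(w_0)=1$ in $G_k$ for all $k$ large enough. Consequently, whenever two words $w,w'$ represent the same $g\in G$, the word $w(w')\m$ is trivial in $G_{k_i}$ for $\omega$-almost every $i$ (because $k_i\ra\infty$); thus $w$ and $w'$ induce the same isometry of $X_i'$ for $\omega$-almost every $i$, hence the same isometry of $X_\omega$. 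Therefore the action descends to a homomorphism $G\ra\mathrm{Isom}(X_\omega)$. I expect this descent to be the main obstacle: it is exactly where the $\varinjlim$ structure is essential, since for an arbitrary sequence of groups one would only obtain an action of a limit group, not of $G$.

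Finally, to rule out a global fixed point, suppose $\xi=\lim_\omega y_i\in X_\omega$ were fixed by $G$. Then for each $s\in S$ we would have $d_\omega(s\xi,\xi)=\lim_\omega d_i(sy_i,y_i)/\lambda_i=0$, and since $S$ is finite, $\lim_\omega \max_{s\in S} d_i(sy_i,y_i)/\lambda_i=\max_{s\in S}\lim_\omega d_i(sy_i,y_i)/\lambda_i=0$. But $\lambda(G_{k_i},X_i')=1$ forces $\max_{s\in S} d_i(sy_i,y_i)/\lambda_i\geq 1$ for every $i$, a contradiction. Hence $G$ acts on the $\R$-tree $X_\omega$ by isometries without a global fixed point, as required.
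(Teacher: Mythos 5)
Your proof is correct and is exactly the standard ultralimit form of the Bestvina--Paulin argument; the paper gives no proof at all here, simply citing Paulin and marking the statement with \qed, and your write-up supplies the expected details (rescaling so the displacement is $1$, passing to a pointed ultralimit which is an $\R$-tree, using the direct-limit structure and $k_i\ra\infty$ to make the action descend from the $G_{k_i}$ to $G$, and contradicting the normalized displacement to exclude a global fixed point). Nothing further is needed.
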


\begin{coro}\label{res: bestvina-paulin for lacunary}
	Assume that $G$ has property $\FR$. 
	Then there exists $k_0$ and a constant $C$ such that for each $k\geq k_0$ and each endomorphism $\phi\in \smorph{G_k}$, one has 
	\begin{displaymath}
		\norm{\phi}\leq C\delta_k.
	\end{displaymath}
\end{coro}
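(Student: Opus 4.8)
The plan is to argue by contradiction, using \autoref{res: bestvina-paulin prelim} to manufacture a forbidden action on an $\R$-tree. Suppose the conclusion fails. Then no pair $(k_0,C)$ works, so in particular we can find a sequence of indices $k_i\ra\infty$ together with endomorphisms $\phi_i\in\smorph{G_{k_i}}$ whose normalized norms blow up, that is $\norm{\phi_i}/\delta_{k_i}\ra\infty$. The idea is to turn each $\phi_i$ into an isometric action of $G_{k_i}$ on a suitable $1$-hyperbolic space whose minimal displacement grows without bound, and then invoke the Bestvina--Paulin proposition to produce a fixed-point-free action of $G$ on an $\R$-tree, contradicting property \FR.

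The key construction is the following. Given $\phi_i\in\smorph{G_{k_i}}$, let $G_{k_i}$ act on its own Cayley graph $X_i'$ (with respect to $S_{k_i}$) \emph{twisted} by $\phi_i$: an element $g$ acts by $x\mapsto \phi_i(g)\cdot x$. First I would rescale the metric on $X_i'$ by $1/\delta_{k_i}$ to obtain a $1$-hyperbolic space $X_i$ (since $(G_{k_i},S_{k_i})$ is $\delta_{k_i}$-hyperbolic). The point is that the minimal displacement of this twisted action is controlled by the norm of $\phi_i$: for any basepoint $g\in G_{k_i}$, the displacement of a generator $s$ is $\dist[S_{k_i}]{\phi_i(s)g}g$, so that, after rescaling, $\lambda(G_{k_i},X_i)=\norm{\phi_i}/\delta_{k_i}$ exactly, by the definition of $\norm\cdot$ in \autoref{def: norm endomorphism} matched against \autoref{def: minimal displacement}. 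Hence the hypothesis $\norm{\phi_i}/\delta_{k_i}\ra\infty$ gives precisely $\lambda(G_{k_i},X_i)\ra\infty$, which is the input required by \autoref{res: bestvina-paulin prelim}.

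With these pieces in place, \autoref{res: bestvina-paulin prelim} yields an isometric action of $G$ on an $\R$-tree with no global fixed point, contradicting the assumption that $G$ has property \FR. This contradiction establishes the existence of $k_0$ and $C$ as claimed.

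The step I expect to require the most care is verifying that the twisted action genuinely factors through $G_{k_i}$ as an honest isometric action and that its minimal displacement equals $\norm{\phi_i}/\delta_{k_i}$ rather than merely being bounded below by it. One must check that $g\mapsto(x\mapsto\phi_i(g)x)$ is an action (this is immediate from $\phi_i$ being an endomorphism and left multiplication being an isometric action), and that the infimum over basepoints in \autoref{def: minimal displacement} coincides with the infimum over $g\in G_{k_i}$ in the definition of $\norm{\phi_i}$; here the fact that $G_{k_i}$ acts transitively on the vertices of its Cayley graph is what lets us replace the infimum over all points of $X_i$ by the infimum over group elements, so the two quantities agree. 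A minor additional point is to ensure the indices $k_i$ obtained from the failure of the statement can be taken strictly increasing to $\infty$, which is automatic since for each fixed threshold there must be arbitrarily large $k$ violating the inequality.
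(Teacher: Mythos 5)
Your proposal is correct and follows essentially the same route as the paper: argue by contradiction, rescale the Cayley graph of $G_{k_i}$ by $1/\delta_{k_i}$, twist the left action by $\phi_i$ so that the minimal displacement equals $\norm{\phi_i}/\delta_{k_i}$, and invoke \autoref{res: bestvina-paulin prelim} to contradict property \FR. The extra care you flag about matching the infimum over basepoints with the infimum defining $\norm{\phi_i}$ is a reasonable point that the paper's proof also treats as immediate.
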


\begin{proof}
  	Assume by contradiction that the corollary does not hold.
	Then there exists a sequence of endomorphisms $\phi_{i}\in \smorph{G_{k_i}}$ where $k_i$ and $\norm{\phi_i}/\delta_{k_i}$ diverge to infinity.
	For each $i\in\N$, denote by $X_{k_i}$ the Cayley graph of $(G_{k_i},S_{k_i})$ endowed with the word metric divided by $\delta_{k_i}$, so that $X_{k_i}$ is $1$-hyperbolic.
	We endow $X_{k_i}$ with a twisted action: for every $g \in G_{k_i}$, for every $x \in X_{k_i}$, $g\cdot x = \phi_i(g)x$.
	Then the minimal displacement of this twisted action is given by
	\begin{displaymath}
		\lambda(G_{k_i}, X_{k_i}) = \frac{1}{\delta_{k_i}}\inf_{x \in X} \max_{s \in S_{k_i}}\dist[S_{k_i}]{\phi_i(s)x}x=\frac{\norm{\phi_i}}{\delta_{k_i}} \ra \infty.
	\end{displaymath}
	By \autoref{res: bestvina-paulin prelim}, we get a contradiction with property $\FR$.
\end{proof}

\begin{coro}
\label{res: bounding displacement}
	Assume $G$ has property \FR, and let  $k_0,C$ be as in \autoref{res: bestvina-paulin for lacunary}. 
	Then for every $n \geq k_0$, for every $\phi \in \smorphi n G$, $\norm \phi \leq C\delta_n $.
\end{coro}

\begin{proof}
	Given $n\geq k_0$ and $\phi\in \smorphi n G$, consider an endomorphism $\Tilde \phi_n:G_n\ra G_n$ lifting $\phi$.
	By \autoref{rem_1lip} and \autoref{res: bestvina-paulin for lacunary}, $||\phi||\leq ||\Tilde \phi_n||\leq C\delta_n$.
\end{proof}

\begin{coro}
\label{res: out lacunary locally finite}
	Assume $G$ has property \FR.
	There exists $A, \kappa \in \R_+$ such that for every $n \in \N$, the cardinality of $\osmorphi nG$ satisfies
	\begin{displaymath}
		\sharp \osmorphi nG \leq Ae^{\kappa\delta_n}.
	\end{displaymath}
	In particular, the semi-group $\osmorph G$ and the group $\out G$ are locally finite.
\end{coro}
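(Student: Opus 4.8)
The plan is to bound the cardinality of $\osmorphi nG$ by counting the possible lifts $\Tilde\phi_n \in \smorph{G_n}$ up to inner automorphism, using the displacement bound from \autoref{res: bounding displacement}. First I would observe that every element $\Phi \in \osmorphi nG$ is represented by some $\phi \in \smorphi nG$, and by \autoref{res: bounding displacement} we have $\norm\phi \leq C\delta_n$. By definition of the norm $\norm\cdot$, after composing with a suitable inner automorphism $\iota_g$ we may assume the representative $\phi$ satisfies $\abs\phi = \max_{s\in S}\abs[S]{\phi(s)} \leq C\delta_n + \epsilon$; more carefully, since the infimum defining $\norm\phi$ is over $g \in G$ and balls are finite, it is attained, so we can pick a representative with $\abs\phi \leq C\delta_n$. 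The key point is that $\phi$ is determined by the images $\phi(s)$ for $s \in S$, each of which lies in the ball of radius $C\delta_n$ in $G$.

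Next I would carry out the counting. Writing $\sharp S = m$ for the number of generators, an endomorphism $\phi$ with $\abs\phi \leq C\delta_n$ is specified by choosing, for each of the $m$ generators $s \in S$, an element $\phi(s)$ in the ball $B_S(1, C\delta_n) \subset G$. The number of elements in a ball of radius $r$ in a group generated by $m$ elements is at most the number of words of length $\leq r$, namely at most $(2m+1)^{r}$ (or $(2m)^{r+1}$), which is bounded by $e^{\kappa' r}$ for a constant $\kappa'$ depending only on $m$. Hence the number of such $\phi$ is at most $\bigl(e^{\kappa' C\delta_n}\bigr)^{m} = e^{(m\kappa' C)\delta_n}$. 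Setting $\kappa = m\kappa' C$ and $A = 1$ (or absorbing any additive slack into $A$) gives the desired bound $\sharp\osmorphi nG \leq A e^{\kappa\delta_n}$, since distinct elements of $\osmorphi nG$ must arise from distinct normalized representatives $\phi$.

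For the final assertion, I would argue that local finiteness follows formally from the stratification. By \autoref{res: lifting automorphisms}, $\osmorph G = \bigcup_{n\in\N}\osmorphi nG$ is an increasing union of the finite sets just bounded, so any finitely generated subsemigroup (or subgroup) of $\osmorph G$ has all its generators lying in a single $\osmorphi nG$ for some $n$ (take the maximum of the finitely many indices), and is therefore contained in the finite set $\osmorphi nG$. Restricting to $\out G \subset \osmorph G$, every finitely generated subgroup is finite, which is precisely local finiteness.

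The main obstacle I anticipate is the bookkeeping around the two different norms $\abs\cdot$ and $\norm\cdot$: the displacement corollary controls $\norm\phi$, but the counting argument naturally counts endomorphisms with small $\abs\phi$, so one must verify that passing to an inner-automorphism representative realizing (or nearly realizing) the infimum in the definition of $\norm\phi$ indeed produces a representative with $\abs\phi$ comparably bounded. Since $\norm\phi = \inf_g \abs{\iota_g\circ\phi}$ and balls in $G$ are finite, the infimum is attained by some $g$, so there is a genuine representative with $\abs\phi = \norm\phi \leq C\delta_n$, which resolves the difficulty cleanly; the rest is the standard exponential volume bound for balls in a finitely generated group.
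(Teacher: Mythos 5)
Your argument is correct and follows essentially the same route as the paper: bound $\norm\Phi$ by $C\delta_n$ via \autoref{res: bounding displacement}, pass to a representative with $\abs\phi\leq C\delta_n$ (the infimum is attained because the values $\abs{\iota_g\circ\phi}$ are nonnegative integers), count such representatives by the exponential volume bound $(2\sharp S+1)^{C\delta_n\sharp S}$, and deduce local finiteness from the increasing union $\osmorph G=\bigcup_n\osmorphi nG$ of finite sets. The only cosmetic difference is that the paper phrases the count as ``at most $(2\sharp S+1)^{N\sharp S}$ outer endomorphisms with $\norm\Phi\leq N$'' without spelling out the attainment of the infimum, which you do explicitly.
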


\begin{proof}
For any $N\geq 0$, there are at most $(2\sharp S+1)^{N\sharp S}$ endomorphisms $\phi\in\smorph G$, such that $|\phi| \leq N$.
In particular, there are at most $(2\sharp S+1)^{ N\sharp S}$ outer endomorphisms $\Phi\in \osmorph G$, such that $\norm \Phi \leq N$.
The first statement is then a consequence of \autoref{res: bounding displacement}.
Since $\smorph G$ is an increasing union of the finite semigroups $\smorphi nG$, it is locally finite.
Similarly, $\out G$ is locally finite as it is an increasing union of the finite groups $\out G\cap \osmorphi nG$. 
\end{proof}

%
\subsection{Spectrum of the automorphism group}
%
\label{sec: spectrum}

The goal of this section is to study the \emph{spectrum} 
\begin{displaymath}
	\Lambda = \set{\norm\Phi}{\Phi \in \out G}
\end{displaymath}
of norms of outer automorphisms of $G$.
We prove the following proposition telling us that if the hyperbolic groups $G_k$ converge fast enough to $G$, 
then there are arbitrarily large gaps in $\Lambda$.

\begin{prop}
\label{res: large gaps}
	Assume $G$ has property \FR\ and 
	\begin{displaymath}
		\lim_{k \rightarrow + \infty} \frac{\inj{\eta_k}}{\delta_k^2} = \infty.
	\end{displaymath}
	There exists $A,B >0$ and $k_0 \in \N$ such that for every $k \geq k_0$, 
        $\Lambda \cap \left(A \delta_k, B\inj{\eta_k}/\delta_k\right)$ is empty.
\end{prop}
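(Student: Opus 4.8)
The plan is to establish the slightly sharper statement that
$\Lambda \cap \left(C\delta_k,\ \inj{\eta_k}/(10\delta_k)\right) = \emptyset$
for all $k$ beyond some threshold, where $C$ and $k_0$ are the constants furnished by \autoref{res: bestvina-paulin for lacunary}; the proposition then follows with $A=C$ and $B=1/10$. The whole argument rests on a dichotomy governing, for a fixed index $k$, the possible values of $\norm\Phi$: either $\norm\Phi$ is small (bounded by $C\delta_k$) because $\Phi$ lifts to $G_k$, or it is large (at least $\inj{\eta_k}/(10\delta_k)$) because it does not. First I would pass to a well-chosen representative: since the quantities $\abs{\iota_g\circ\phi}$ are non-negative integers, the infimum defining $\norm\Phi$ is attained, so I may fix a representative $\phi$ of $\Phi$ with $\abs\phi = \norm\phi = \norm\Phi$. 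This normalization is essential, because the lifting criterion of \autoref{res: lem_lift} is phrased in terms of $\abs\phi$, and only for this optimal representative does a bound on $\abs\phi$ translate into a bound on $\norm\Phi$.

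Next I would carry out the dichotomy. Fix $k\geq k_0$ and suppose, for contradiction, that $\norm\Phi$ lies in the interval $\left(C\delta_k,\ \inj{\eta_k}/(10\delta_k)\right)$. The upper bound gives $10\abs\phi = 10\norm\Phi < \inj{\eta_k}/\delta_k$, which is exactly the hypothesis of \autoref{res: lem_lift}. That lemma then produces an endomorphism $\Tilde\phi_k \in \smorph{G_k}$ lifting $\phi$. Now \autoref{rem_1lip} gives $\norm\phi \leq \norm{\Tilde\phi_k}$, while \autoref{res: bestvina-paulin for lacunary}, applicable because $k\geq k_0$ and $\Tilde\phi_k$ is an endomorphism of $G_k$, gives $\norm{\Tilde\phi_k}\leq C\delta_k$. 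Combining, $\norm\Phi = \norm\phi \leq C\delta_k$, contradicting the lower bound $\norm\Phi > C\delta_k$. Hence no outer automorphism has norm in this interval. I would note that this step is morally \autoref{res: bounding displacement} applied at the single step $k$; I prefer to re-run the short argument through \autoref{res: lem_lift} and \autoref{res: bestvina-paulin for lacunary} directly, since single-step liftability at $k$ need not yield coherent liftability from step $k$ (the ratio $\inj{\eta_l}/\delta_l$ may dip for some $l>k$), so quoting \autoref{res: bounding displacement} verbatim would require an unnecessary detour.

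Finally I would invoke the standing hypothesis $\inj{\eta_k}/\delta_k^2\to\infty$, which so far plays no role in the emptiness argument but is precisely what makes the gap genuine. Enlarging $k_0$ if necessary so that $\inj{\eta_k}/\delta_k^2 > 10C$ for all $k\geq k_0$, one gets $C\delta_k < \inj{\eta_k}/(10\delta_k)$, so the interval is nonempty, and in fact its length $\inj{\eta_k}/(10\delta_k)-C\delta_k$ tends to infinity, which is what produces the large gaps announced in the introduction. Setting $A=C$ and $B=1/10$ then completes the proof. I do not expect a serious obstacle: the substance has been front-loaded into \autoref{res: bestvina-paulin for lacunary} (the Bestvina--Paulin input) and \autoref{res: lem_lift} (the lifting estimate), and the only points demanding genuine care are the passage to the optimal representative and the clean separation between the regime where $\Phi$ lifts to $G_k$ and the regime where it cannot.
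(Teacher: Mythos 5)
Your proof is correct and follows essentially the same route as the paper's: apply \autoref{res: lem_lift} to lift any $\Phi$ with $\norm\Phi<\inj{\eta_k}/(10\delta_k)$ to an endomorphism of $G_k$, bound its norm by $C\delta_k$ via \autoref{rem_1lip} and \autoref{res: bestvina-paulin for lacunary}, and use the hypothesis only to make the resulting gap nonempty. Your explicit remark that one must pick a representative realizing $\norm\Phi=\abs\phi$ (the infimum being attained over integers) is a detail the paper leaves implicit, but it is the same argument.
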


\begin{rema} 
	If $\inj{\eta_k}/\delta_k^2$ is bounded, then the conclusion of the proposition is still true, but vacuous.
\end{rema}

\begin{proof}
	Let $C,k_0$ be given by \autoref{res: bestvina-paulin for lacunary}.
	Let $k\geq k_0$.
	Assume that $\norm \phi < \inj{\eta_k}/10 \delta_k$.
	According to \autoref{res: lem_lift} $\phi$ lifts as an endomorphism $\tilde \phi_k$ of $G_k$.
	Hence by \autoref{res: bestvina-paulin for lacunary} we get $\norm \phi \leq \norm {\tilde \phi_k} \leq C\delta_k$.
	It follows that $\Lambda\cap (C\delta_k,\inj{\eta_k}/10 \delta_k)=\emptyset$.
	The assumption just guarantees that $(C\delta_k,\inj{\eta_k}/10 \delta_k)$ is non-empty for $k$ large enough.
\end{proof}

\begin{coro}
\label{res: subgroup vs displacement}
	Assume $G$ has property \FR\ and
	\begin{displaymath}
		\lim_{k \rightarrow + \infty} \frac{\inj{\eta_k}}{\delta_k^3} = \infty.
	\end{displaymath}
	Then there exists $A>0$ such that for all $k$ large enough, the set 
	\begin{displaymath}
          U_k=\set{\Phi\in \out G}{\max\{\norm \Phi,  \norm{\Phi^{-1}}\} \leq A\delta_k} 
	\end{displaymath}
	is a subgroup of $\out G$.
\end{coro}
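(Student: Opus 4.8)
The plan is to combine a submultiplicativity property of the norm $\norm\cdot$ under composition with the spectral gap furnished by \autoref{res: large gaps}. The constant $A$ will be exactly the lower constant produced by that proposition, and the cubic growth hypothesis is precisely what is needed to keep a product of two elements of $U_k$ below the gap. So the statement is really a formal consequence of the gap, once the right elementary inequality is in place.

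First I would record the submultiplicativity $\norm{\Phi\Psi}\leq \norm\Phi\cdot\norm\Psi$ for all $\Phi,\Psi\in\out G$. Pick representatives $\phi,\psi$ realizing $\abs\phi=\norm\Phi$ and $\abs\psi=\norm\Psi$; the infima defining $\norm\cdot$ are attained since word lengths are non-negative integers. Then $\phi\circ\psi$ is a representative of $\Phi\Psi$, and using the inequality $\abs[S]{\phi(g)}\leq \abs\phi\cdot\abs[S]g$ noted after \autoref{def: norm endomorphism} one gets $\norm{\Phi\Psi}\leq\abs{\phi\circ\psi}\leq\abs\phi\cdot\abs\psi=\norm\Phi\cdot\norm\Psi$.

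Next, let $A,B>0$ and $k_0$ be the constants given by \autoref{res: large gaps}, so that $\Lambda\cap(A\delta_k,B\inj{\eta_k}/\delta_k)=\emptyset$ for all $k\geq k_0$; I take this very $A$ in the definition of $U_k$. Closure under inversion is immediate, since the defining condition $\max\{\norm\Phi,\norm{\Phi\m}\}\leq A\delta_k$ is symmetric in $\Phi$ and $\Phi\m$, and the identity lies in $U_k$ for $k$ large because $\norm{\mathrm{id}}\leq 1$. For closure under multiplication, take $\Phi,\Psi\in U_k$. Submultiplicativity gives $\norm{\Phi\Psi}\leq\norm\Phi\cdot\norm\Psi\leq A^2\delta_k^2$, and likewise $\norm{(\Phi\Psi)\m}=\norm{\Psi\m\Phi\m}\leq\norm{\Psi\m}\cdot\norm{\Phi\m}\leq A^2\delta_k^2$. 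The cubic hypothesis $\inj{\eta_k}/\delta_k^3\to\infty$ ensures that for $k$ large enough $A^2\delta_k^2<B\inj{\eta_k}/\delta_k$. Since $\norm{\Phi\Psi}\in\Lambda$ lies strictly below the right endpoint of the gap yet cannot lie inside it, it must in fact satisfy $\norm{\Phi\Psi}\leq A\delta_k$, and the same reasoning applies to $\norm{(\Phi\Psi)\m}$. Hence $\Phi\Psi\in U_k$, so $U_k$ is a subgroup of $\out G$.

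The one point deserving care, and the reason the hypothesis here is cubic rather than the quadratic one of \autoref{res: large gaps}, is that submultiplicativity only bounds $\norm{\Phi\Psi}$ by $A^2\delta_k^2$, which overshoots the target $A\delta_k$ by a factor of $A\delta_k$. The whole argument rests on the gap being wide enough to absorb this quadratic bound, that is, on $A^2\delta_k^2$ falling below $B\inj{\eta_k}/\delta_k$, which amounts to $\inj{\eta_k}/\delta_k^3\geq A^2/B$. This is the main (and essentially only) obstacle, and it is dispatched entirely by the growth assumption.
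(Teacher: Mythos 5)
Your proposal is correct and follows essentially the same route as the paper: invoke the gap from \autoref{res: large gaps}, use submultiplicativity of $\norm\cdot$ to bound $\norm{\Phi\Psi}$ by $A^2\delta_k^2$, and use the cubic hypothesis to place this below $B\inj{\eta_k}/\delta_k$ so that the gap forces $\norm{\Phi\Psi}\leq A\delta_k$. The only difference is that you spell out the submultiplicativity inequality and the closure under inversion, which the paper leaves implicit.
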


\begin{proof}
	Let $A,B >0$ and $k_0 \in \N$ be the constants given by  \autoref{res: large gaps}.
	Up to replacing $k_0$ by a larger integer we can assume that for every $k \geq k_0$, $B \inj{\eta_k}/\delta_k > A^2 \delta_k^2$.
	Consider $k \geq k_0$ and  $\Phi_1, \Phi_2 \in U_k$, and let us prove that $\Phi_1\Phi_2\in U_k$.
	It follows from our choice of $k_0$ that 
	\begin{displaymath}
		\norm{\Phi_1\Phi_2} \leq \norm {\Phi_1} \norm{\Phi_2} \leq A^2 \delta_k^2 < B \frac{\inj{\eta_k}}{\delta_k}.
	\end{displaymath}
	According to \autoref{res: large gaps}, $\norm{\Phi_1 \Phi_2} \leq A \delta_k$, hence $\Phi_1 \Phi_2$ belongs to $U_k$.
\end{proof}

%
\section{Hopf and co-Hopf property}
%
\label{sec: hopf cohopf}

In this section we investigate the Hopf and co-Hopf property for lacunary hyperbolic groups.

\begin{defi}
\label{def: hopf - cohopf}
	A group $G$ is \emph{Hopfian} (\resp \emph{co-Hopfian}) if every endomorphism of $G$ which is surjective (\resp injective) is an automorphism of $G$.
\end{defi}

The Hopf property for torsion-free hyperbolic groups was first proved by Z.~Sela \cite{Sela:1999ha}.
This was extended to hyperbolic groups with torsion by C.~Reinfeldt and R.~Weidmann \cite{Reinfeldt:xSP4l2Mt}
\begin{theo}[{Sela \cite[Theorem~3.3]{Sela:1999ha}, Reinfeldt--Weidmann \cite[Corollary~6.9]{Reinfeldt:xSP4l2Mt}}]
\label{res: hopf hyperbolic}
	Every hyperbolic group is Hopfian.
\end{theo}

Based on this result, we prove the following statement.

\begin{theo}
\label{res: hopf lacunary}
	Every lacunary hyperbolic group is Hopfian.
\end{theo}

\begin{proof}
	Let $G$ be a lacunary hyperbolic group.
	Let 
	\begin{displaymath}
  		(G_0,S_0) \ra (G_1,S_1) \ra  \dots \ra  (G_k,S_k) \ra \dots
	\end{displaymath}
	be the sequence of groups provided by \autoref{res: caracterization lh groups}.
	As above, we assume that $G_0$ is a free group $\free r$ with basis $S_0=\{s_1,\dots,s_r\}$.
 	Let $\phi$ be a surjective endomorphism of $G$.        
	By \autoref{res: lifting automorphisms}, $\phi\in \smorphi{n}G$ for some $n\in \N$, so there exists a lift $\Tilde \phi_0\in \smorph{G_0}$ of $\phi$ that induces lifts $\Tilde \phi_k\in \smorph{G_k}$ for all $k\geq n$.

	\medskip
	We claim that $\Tilde \phi_k$ is surjective for all $k$ large enough. 
	Indeed, $\phi$ being surjective, each element  $s\in S$ can be written as a word on the elements of $\phi(S)$. 
	This fact translates in $G_0$ into the fact that for each $i\leq r$, there is a word $w_i(s_1,\dots,s_r)\in \free r$ such that the element $s_i\m w_i(\Tilde \phi_0(s_1),\dots,\Tilde \phi_0(s_r))\in \free r$ lies in the kernel of $\eta_0\colon \free r\ra G$. 
	As $G$ is the direct limit of $(G_k)$, these elements have trivial image in $G_k$ for $k$ large enough. 
	This implies that $\Tilde \phi_k$ is onto.
          
	\medskip
	Since hyperbolic groups are Hopfian, $\Tilde \phi_k$ is an isomorphism for all $k$ large enough.
	To prove that $\phi$ is one-to-one, consider $g\in \ker\phi$, $g_0\in G_0$ a preimage of $g$, and $g_k=\pi_k(g_0)$ its image in $G_k$.
	Since $\phi(g) = 1$, $\Tilde \phi_0(g_0)\in \ker\eta_0$. 
	However $G$ is the direct limit of $(G_k)$, thus  $\Tilde \phi_k(g_k)=1$ for all $k$ large enough.
	The maps $\Tilde \phi_k$ being ultimately isomorphisms, we get that $g_k = 1$ for $k$ large enough, hence $g=1$.
\end{proof}

The following result gives examples of groups that are Hopfian, but not equationally Noetherian.

\begin{theo}[Champetier {\cite[Théorème~1.3]{Champetier:2000jx}}, Osin \cite{Wilton:2011tp}] 
\label{res: EN}
  There exists a torsion-free lacunary hyperbolic group that is not equationally Noetherian.
\end{theo}

\begin{proof}
  Fix $H_0\xrightarrow{p_0} H_1\xrightarrow{p_1} H_2\onto\cdots$ any sequence of non-injective epimorphisms between torsion-free hyperbolic groups.
We first claim that there exists a lacunary hyperbolic group $G$ such that for all $i \in \N$ and all finite set $F\subset H_i$ there exists a morphism 
$\phi_{i,F}:H_i\to G$ whose restriction to $F$ is one-to-one.
Choose $(i_n,F_n)_{n\in \N}$ an enumeration of all pairs $(i,F)$ where $F$ is a finite subset of $H_i$.
We construct by induction a sequence of torsion-free hyperbolic groups $(G_n,S_n)$ as in \autoref{res: caracterization lh groups} such that for all $k\leq n$,
there exists a morphism $H_{i_k}\to G_n$ whose restriction to $F_k$ is injective.
We start with $G_0=H_{i_0}$ and $S_0$ a finite generating set of $G_0$.

\medskip
Assume now that $(G_n,S_n)$ is constructed. 
Let $B\subset G_n$ be a ball whose radius is sufficiently large compared to the hyperbolicity constant of $(G_n,S_n)$ and containing for all $k\leq n$ the image of $F_k$ by the morphism $H_{i_k}\to G_n$.
Applying \cite[Corollaire~5.21]{Champetier:2000jx}, we can find a common quotient $G_{n+1}$ of $G_n$ and $H_{i_{n+1}}$ such that the projections $G_n\onto G_{n+1}$ and $H_{i_{n+1}}\onto G_{n+1}$ are injective on $B$ and $F_{n+1}$ respectively.
We take for $S_{n+1}$ the image of $S_n$ in $G_{n+1}$. Clearly, $(G_{n+1},S_{n+1})$ satisfies our requirement.
The direct limit $G$ of $(G_n)$ is a torsion-free lacunary hyperbolic group satisfying our claim.

\medskip
Let us check that $G$ is not equationally Noetherian. For all $i\in \N$,
consider $r_i\in \ker p_i \setminus \{1\}$. By construction of $G$, there exists $h_i:H_i\ra G$ which does not kill $r_i$.
In particular $h_i$ does not factor through $p_i$.
Presenting $H_i=\grp{X|R_i}$ with a finite set of relators $R_i\subset R_{i+1}$ so that $p_i$ is induced by the identity on $X$,
the ascending union of all $R_i$ yields a system of equations showing that $G$ is not equationally Noetherian.
\end{proof}

\begin{theo}
\label{res: cohopf lacunary}
	Let $G$ be a lacunary hyperbolic group.
	If $G$ has property \FR\ then $G$ is co-Hopfian.
\end{theo}

\begin{proof}
	Without loss of generality we can assume that $G$ is infinite and not virtually cyclic.
	Otherwise it would act on a line, in contradiction with our hypothesis. 
	Let 
	\begin{equation*}
   (\free r,S_0)  =  (G_0,S_0) \ra (G_1,S_1) \ra  \dots \ra  (G_k,S_k) \ra \dots
	\end{equation*}
	be a sequence of groups provided by \autoref{res: caracterization lh groups}.
 	Let $\phi$ be a monomorphism of $G$. 
	By \autoref{res: lifting automorphisms}, there exists $n$ such that  $\phi\in \smorphi nG$.
	Since $\smorphi nG$ is a semi-group, every non-negative power of $\phi$ belongs to $\smorphi nG$.
	It follows from \autoref{res: bounding displacement} that the  norms $\norm {\phi^m}$ are bounded.
	Since the set of outer endomorphisms of $G$ with bounded norm is finite, there exist $m_1>m_2$ and $a \in G$ such that 
	\begin{equation}
	\label{eqn: cohopf lacunary - powers equal up to conjugacy}
		\phi^{m_1} = \iota_a \circ \phi^{m_2}.
	\end{equation}
	By definition of $\smorphi nG$,  there is a lift $\Tilde \phi_0\in \smorph{G_0}$ of $\phi$ that induces lifts $\Tilde \phi_k\in \smorph{G_k}$ for all $k\geq n$.
	Note that $\Tilde \phi_k$ is not necessarily one-to-one.
	Let $a_0\in G_0$ be a preimage of $a$, and $a_k=\pi_k(a_0)$ its image in $G_k$.
	Since $G$ is finitely generated, $\Tilde \phi_k^{m_1} = \iota_{a_k}\circ \Tilde \phi_k^{m_2}$ for all $k$ larger than some $n'\geq n$.
	
	\medskip
	Fix $k\geq n'$ and $p \in \N$.
	By composing the previous equality on the right and on the left by $\Tilde \phi_k^p$ we get
	\begin{displaymath}
	\label{eqn: cohopf lacunary - centralizing the image}
		\iota_{\Tilde \phi_k^p(a_k)} \circ \Tilde \phi_k^{m_2+p} = \Tilde \phi_k^{m_1 + p} = \iota_{a_k} \circ \Tilde \phi_k^{m_2+ p}.
	\end{displaymath}
	It tells us that the element $u_{k,p}:= a_k^{-1}\Tilde \phi_k^p(a_k)$ belongs to the centralizer of $\Tilde \phi_k^{m_2+p}(G_k)$  
        which we denote by $Z_k(p)$.

	We claim that $Z_k(p)$ is finite.
	Since $G_k$ is hyperbolic, it is sufficient to prove that $\Tilde \phi_k^{m_2+p}(G_k)$ is non elementary.
	Assume on the contrary that $\Tilde \phi_k^{m_2+p}(G_k)$ is virtually cyclic.
	It follows that $\phi^{m_2+p}(G)=\eta_k\circ \Tilde \phi_k^{m_2+p}(G_k)$ in $G$ is virtually cyclic as well.
	Since $\phi$ is one-to-one, $G$ is virtually cyclic, a contradiction. 
	Hence $(Z_k(p))_{p \in \N}$ is an ascending sequence of finite subgroups of $G_k$.
	The group $G_k$ being hyperbolic, their union is finite
	so there exists $p_1 > p_2$ such that $u_{k,p_1} = u_{k,p_2}$, i.e. $\Tilde \phi_k^{p_1}(a_k) = \Tilde \phi_k^{p_2}(a_k)$.
	Pushing this equality in $G$, we get $\phi^{p_1}(a) = \phi^{p_2}(a)$.
	However $\phi$ is one-to-one, thus $a = \phi^r(a)$ with $r = p_1 - p_2$.
	Hence for every $q \in \N^*$, $a = \phi^{qr}(a)$.
	In particular there exists $b \in G$ such that $a = \phi^{m_2}(b)$.
	Consequently (\ref{eqn: cohopf lacunary - powers equal up to conjugacy}) becomes 
	\begin{displaymath}
		\phi^{m_1} = \iota_a \circ \phi^{m_2} = \iota_{\phi^{m_2}(b)} \circ \phi^{m_2} = \phi^{m_2}\circ \iota_b.
	\end{displaymath}
	Since $\phi$ is one-to-one we get $\phi^{m_1-m_2} = \iota_b$, hence $\phi$ is onto.
\end{proof}

%
\section{Embedding locally finite groups}
%
\label{sec: embedding locally finite}

In this section we provide examples of lacunary hyperbolic groups, which are not hyperbolic, and illustrate the results of this paper.
The main goal is to prove the following theorem, showing that every countable, locally finite group can be embedded in the outer automorphism group of a lacunary hyperbolic group.

\begin{theo}
\label{res: embedding locally finite group}
	Let $A$ be a countable, locally finite group, and $G$ be a non-elementary hyperbolic group.
	
	Then there exists a quotient $Q$ of $G$ which is lacunary hyperbolic with property $\FR$ such that $\out Q$ is locally finite and contains a subgroup isomorphic to $A$. 
\end{theo}

The proof of the theorem is based on the following result that will serve as the induction step during the construction.
Following \cite{Olshanskii:1993dr}, given a subgroup $G$ of a group $H$, we define the subgroup $E_H(G)\subset H$ as the set of elements of $H$ whose orbit under conjugation by $G$ is finite. 
Equivalently, $h\in E_H(G)$ if $h$ commutes with a finite index subgroup of $G$.
We note that $E_H(G)\subset E_H(G')$ for $G'\subset G$, with equality if $[G:G']<\infty$.
When $G$ is a non-elementary subgroup of a hyperbolic group $H$, then $E_H(G)$ is the unique maximal finite subgroup of $H$ normalized by $G$ \cite[Proposition~1]{Olshanskii:1993dr}. 

\begin{prop}
\label{res: embedding a larger finite group}
	Consider a non-elementary hyperbolic group $G$, a finite group $A$, and a homomorphism  $\phi \colon A \rightarrow \aut G$ such that
	\begin{nenumerate}[label=(\alph{*}), ref=(\alph{*})]
		\item \label{enu: embedding a larger finite group - hyp1}
		$\phi$ induces an embedding of $A$ into $\out G$; 
		\item \label{enu: embedding a larger finite group - hyp2}
		$E_{\sdp[\phi] GA}(G)=\{1\}$.
	\end{nenumerate} 
	Consider an embedding $j \colon A \rightarrow B$ into a finite group $B$.
	
	\medskip
	Then for every $r \geq 0$, there exists a non-elementary hyperbolic quotient $\bar G$ of $G$ and a homomorphism $\psi \colon B \rightarrow \aut{\bar G}$ with the following properties.
	\begin{enumerate}
		\item The injectivity radius of the canonical projection $G \rightarrow \bar G$ is at least $r$.
		\item \label{enu: embedding a larger finite group - it_cd} For every $a \in A$, the following diagram commutes.
		\begin{center}
			\begin{tikzpicture}[description/.style={fill=white,inner sep=2pt},] 
				\matrix (m) [matrix of math nodes, row sep=2em, column sep=2.5em, text height=1.5ex, text depth=0.25ex] 
				{ 
					G &  \bar G	\\
					G & \bar G	\\
				}; 
				\draw[>=stealth, ->] (m-1-1) -- (m-2-1) node[pos=0.5, left]{$\phi(a)$};
				\draw[>=stealth, ->] (m-1-2) -- (m-2-2) node[pos=0.5, right]{$\psi\circ j(a)$};
				
				\draw[>=stealth, ->] (m-1-1) -- (m-1-2);
				\draw[>=stealth, ->] (m-2-1) -- (m-2-2);
				\end{tikzpicture} 
		\end{center}
		\item \label{enu: embedding a larger finite group - it_ind1} 
		The map $\psi$ induces an embedding of $B$ into $\out{\bar G}$.
		\item \label{enu: embedding a larger finite group - it_ind2} 
        $E_{\sdp[\psi]{\bar G}B}(\bar G)=\{1\}$.
	\end{enumerate}
\end{prop}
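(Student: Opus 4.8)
The plan is to realize the enlargement of the acting group by first passing to an amalgam and then taking an Olshanskii-style small cancellation quotient. Since $A$ is finite and $G$ is hyperbolic, the group $\Gamma=\sdp[\phi]{G}{A}$ is hyperbolic and non-elementary, and hypothesis \ref{enu: embedding a larger finite group - hyp2} reads $E_\Gamma(G)=\{1\}$. Viewing $A$ as the canonical complement of $G$ in $\Gamma$ and, through $j$, as a subgroup of $B$, I would form the amalgamated product $\hat\Gamma=\Gamma *_A B$. Being an amalgam of two hyperbolic groups over a finite subgroup, $\hat\Gamma$ is again hyperbolic, and it is non-elementary since it contains $G$. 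Three features of $\hat\Gamma$ drive the argument: the projection $\hat\Gamma\onto B$ killing $G$ has kernel $\grp{\grp G}$, so $B$ embeds with $B\cap\grp{\grp G}=\{1\}$ and $\hat\Gamma/\grp{\grp G}\cong B$; the conjugation action of $A\subset\hat\Gamma$ on $G$ is exactly $\phi$; and a short Bass--Serre argument (the non-elementary subgroup $G$ fixes a unique vertex of the Bass--Serre tree, and any finite subgroup it normalizes must fix the nearest-point projection of that vertex, hence the vertex itself) upgrades \ref{enu: embedding a larger finite group - hyp2} to $E_{\hat\Gamma}(G)=\{1\}$.

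The goal then reduces to producing a normal subgroup $K\nsub\hat\Gamma$ with $K\subset\grp{\grp G}$ such that $\bar\Gamma=\hat\Gamma/K$ is non-elementary hyperbolic, the image $\bar G$ of $G$ is normalized by $B$ with $\grp{\grp G}=G\cdot K$, the projection $G\ra\bar G$ is injective on the ball of radius $r$, and $E_{\bar\Gamma}(\bar G)=\{1\}$. Indeed, once such a $K$ is found one sets $\psi(b)$ to be the automorphism of $\bar G$ induced by conjugation by $b$; this is automatically a homomorphism $\psi\colon B\ra\aut{\bar G}$, and $\bar\Gamma=\sdp[\psi]{\bar G}{B}$. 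Conclusion \ref{enu: embedding a larger finite group - it_cd} is then immediate, since $j(a)=a$ acts on $G$ by $\phi(a)$; conclusion \ref{enu: embedding a larger finite group - it_ind2} is exactly $E_{\bar\Gamma}(\bar G)=\{1\}$; and conclusion \ref{enu: embedding a larger finite group - it_ind1} follows formally, because a class $\psi(b)$ is inner precisely when $b$ lies in $\bar G\cdot C_{\bar\Gamma}(\bar G)$, and $C_{\bar\Gamma}(\bar G)\subset E_{\bar\Gamma}(\bar G)=\{1\}$ together with $B\cap\bar G=\{1\}$ forces $b=1$.

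To build $K$, fix generators $g_1,\dots,g_n$ of $G$ and elements $b_1,\dots,b_t$ of $B$ such that $A$ and the $b_i$ generate $B$, and choose long words $g'_{i,s}\in G$. I would take $K$ to be the normal closure in $\hat\Gamma$ of the relators $r_{i,s}=b_i g_s b_i\m (g'_{i,s})\m$, which visibly lie in $\grp{\grp G}$ and, once imposed, force $b_i\bar g_s b_i\m=\overline{g'_{i,s}}\in\bar G$; as $B$ is finite this gives $b_i\bar G b_i\m=\bar G$, whence $\grp{\grp G}=G\cdot K$, $B$ normalizes $\bar G$, and $\bar\Gamma/\bar G\cong B$ with $B\cap\bar G=\{1\}$. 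The crucial point, and the main obstacle, is to choose the words $g'_{i,s}$ so that the family $\{r_{i,s}\}$ satisfies a small cancellation condition over $\hat\Gamma$ in the sense of \cite{Olshanskii:1993dr}. Here I would exploit the complete freedom in the $g'_{i,s}$: the compatibility of the $\psi(b_i)$ with the multiplication of $B$ is automatic inside $\bar\Gamma$, so the words may be taken as generic long elements of $G$, making distinct relators share only short pieces. Olshanskii's theorem then yields, for the $g'_{i,s}$ long enough, that $\bar\Gamma$ is non-elementary hyperbolic, that $\hat\Gamma\ra\bar\Gamma$ is injective on a ball of radius $\geq r$ (hence so is $G\ra\bar G$, $G$ being undistorted in $\hat\Gamma$), and — the most delicate output — that the $E$-subgroup is preserved, $E_{\bar\Gamma}(\bar G)=\{1\}$. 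Verifying the small cancellation hypothesis for relators of the constrained shape $b_i g_s b_i\m(\cdots)\m$, and extracting the preservation of the $E$-invariant from Olshanskii's machinery, is where essentially all the technical work lies.
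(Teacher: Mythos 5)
Your construction is essentially the one in the paper: the same amalgam $H=(\sdp[\phi] GA)*_A B$, the same Bass--Serre argument upgrading hypothesis~\ref{enu: embedding a larger finite group - hyp2} to $E_{H}(G)=\{1\}$, the same small cancellation quotient identifying the $B$-conjugates of $G$ inside the normal closure of $G$, and the same formal derivation of assertions~\ref{enu: embedding a larger finite group - it_cd} and~\ref{enu: embedding a larger finite group - it_ind1} once one knows $E_{\bar H}(\bar G)=\{1\}$. The gap is exactly where you locate it, and it is a genuine one: the claim that ``Olshanskii's machinery'' delivers $E_{\bar H}(\bar G)=\{1\}$ is not justified. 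This is not an output of \cite[Theorem~2]{Olshanskii:1993dr}, nor does it follow from genericity of the words $g'_{i,s}$: a small cancellation quotient can perfectly well create new finite subgroups normalized by the image of $G$ unless one arranges otherwise in advance.

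The missing mechanism has two pieces. First, one needs that every finite subgroup $\bar F<\bar H$ lifts isomorphically to a finite subgroup $F<H$ (Assertion~\ref{enu: small cancellation - prop5} of \autoref{res: small cancellation}); Ol'shanski\u{\i} states this only for finite cyclic subgroups, and the general case requires a separate justification (the paper invokes \cite[Proposition~6.12]{Coulon:2014fr}). Second --- and this is the step your proposal has no device for --- the finite set on which the quotient map is required to be injective must be enlarged \emph{before} applying the small cancellation theorem: besides the ball of radius $r$ in $G$, one includes, for each representative $F_i$ of a conjugacy class of finite subgroups of $H$ and each generator $s$ of $G$, all elements $sus\m v$ with $u,v\in F_i$. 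Then if $E_{\bar H}(\bar G)$ is nontrivial it lifts to some conjugate $h\m F_i h$; normality of $\bar G$ lets one replace $E_{\bar H}(\bar G)$ by $\pi(F_i)$; and the relations $\pi(sus\m)=\pi(v)$, which express that $\bar G$ normalizes $E_{\bar H}(\bar G)$, pull back to $sus\m=v$ in $H$ precisely because $sus\m v\m$ lies in the injectivity set. Hence $F_i$ is a finite subgroup of $H$ normalized by $G$, so $F_i\subset E_H(G)=\{1\}$. Without this preparation, the key identity $E_{\bar H}(\bar G)=\{1\}$ --- on which both~\ref{enu: embedding a larger finite group - it_ind1} and~\ref{enu: embedding a larger finite group - it_ind2} rest --- remains unproved.
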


Let us first explain how to deduce \autoref{res: embedding locally finite group} from this proposition.

\begin{proof}[Proof of  \autoref{res: embedding locally finite group}] 
	We are going to construct our lacunary hyperbolic group $Q$ as a limit of a sequence of hyperbolic groups, $G_0\onto G_1\onto \dots$.
	We choose for $G_0$ a non-elementary hyperbolic group that is a quotient of $G$, having property $\FR$ and with $E_{G_0}(G_0)=\{1\}$.
	Since property $\FR$ is stable under taking quotients, this will ensure that $Q$ has property $\FR$, and $\out Q$ is locally finite (\autoref{res: out lacunary locally finite}).
	Note that there exists a hyperbolic group $\Gamma$ with property \FR: 
	one can take a triangle group $\Gamma=\grp{x,y|x^2=y^3=(xy)^7}$ by Serre's Lemma \cite[Chapter~4, Lemma~2.1]{Chiswell:2001he}, or a hyperbolic group with Kazhdan's property (T) \cite{BekHarVal08}.
	Then consider $G'_0$, a non-elementary hyperbolic group that is common quotient of $\Gamma$ and $G$.
	This can be achieved by applying \cite[Theorem~2]{Olshanskii:1993dr} to the free product $(G/F) \ast (\Gamma/\Lambda)$ where $F$ and $\Lambda$ stand for the maximal finite normal subgroup of $G$ and $\Gamma$ respectivley; see also \cite[Corollaire~5.21]{Champetier:2000jx}.
	To ensure that $E_{G_0}(G_0)=\{1\}$, take $G_0=G'_0/F'_0$ where $F'_0$ stands for the maximal normal finite subgroup of $G'_0$.

	\medskip
	The group $A$ being countable, we can write $A$ as the union of an increasing sequence of finite groups $(A_k)_{k \in \N}$.
	Without loss of generality, take $A_0=\{1\}$.
	For every $k \in \N$, we denote by $j_k$ the natural embedding $j_k \colon A_k \rightarrow A_{k+1}$.	We choose a finite generating set $S$ for $G$, and we will denote by $S_k$ its image in $G_k$.

	\paragraph{The construction.} 
	Since $A_0$ is the trivial group, we take for $\phi_0$ the trivial morphism, hence $(G_0,A_0,\phi_0)$ obviously satisfies the assumptions \ref{enu: embedding a larger finite group - hyp1}-\ref{enu: embedding a larger finite group - hyp2} of \autoref{res: embedding a larger finite group}.
	
	\medskip
	For the induction step, assume that $(G_k,A_k,\phi_k)$ has been constructed and satisfies the assumptions of \autoref{res: embedding a larger finite group}.
	Let $\delta_k$ be the hyperbolicity constant of $G_k$ with respect to $S_k$.
	If $k \neq 0$, we write $r_{k-1}$ for the injectivity radius of $G_{k-1} \twoheadrightarrow G_k$, otherwise we simply let $r_{k-1} = 0$.
	Apply \autoref{res: embedding a larger finite group} to the embedding $j_k\colon A_k \rightarrow A_{k+1}$, with $r=\max\{r_{k-1},10^k\delta_k\}$. 
	This constructs a quotient $G_{k+1}$ of $G_k$, with a morphism $\phi_{k+1}:A_{k+1}\ra \aut{G_{k+1}}$.
	Since $G_{k+1}$ is a non-elementary hyperbolic group,
	Assertions \ref{enu: embedding a larger finite group - it_ind1},\ref{enu: embedding a larger finite group - it_ind2} of the proposition show that $(G_{k+1},A_{k+1},\phi_{k+1})$
	satisfies the assumptions \ref{enu: embedding a larger finite group - hyp1}-\ref{enu: embedding a larger finite group - hyp2} of \autoref{res: embedding a larger finite group}.
	
	\paragraph{Properties of the limit group.}
	We define $Q$ as the direct limit of the groups $G_k$.
	Since $G_0$ has property \FR, so does $Q$, thus $\out Q$ is locally finite.
	By construction the projection $\pi_k \colon G_k\twoheadrightarrow G_{k+1}$ is injective on the ball of radius $r_k$ and $(r_k)$ is a non-decreasing sequence. 
	Hence the natural morphism $G_k\twoheadrightarrow G$ is injective  on the ball of radius $r_k$.
	On the other hand $r_k/\delta_k$ tends to infinity,
	it follows from \autoref{res: caracterization lh groups} that $Q$ is a lacunary hyperbolic group.

	\medskip
	Let us prove now that $A$ embeds into $\out G$.
	By construction, for each $k$, we have a morphism $\phi_k:A_k\ra \aut {G_k}$.
	Fix $a\in A_{k_0}$, and for $k\geq k_0$, denote by $\alpha_k=\phi_k(a)$ the corresponding automorphism of $G_k$.
	The commutative diagram \ref{enu: embedding a larger finite group - it_cd} of \autoref{res: embedding a larger finite group} shows that $\alpha_{k+1}\circ \pi_k=\pi_k\circ \alpha_{k}$.
	Hence the automorphisms $\alpha_k$ naturally define an automorphism of $Q$ which we denote by $\phi(a)$.
	It is easy to check that the map $\phi \colon A \rightarrow \aut Q$ obtained in this way is a homomorphism.
	There remains to prove that for each $a\in A\setminus \{1\}$, $\phi(a)$ is not inner.
	Assume on the contrary that $\phi(a)=\iota_g$ for some $g\in Q$, and let $g_0\in G_0$ be a preimage of $g$.
	Denote by $g_k$ the image of $g_0$ in $G_k$, and $\alpha_k=\phi_k(a)$ as above.
	Then for all $x_0\in G_0$, denoting by $x_k$ its image in $G_k$, one has $\alpha_k(x_k)=\iota_{g_k}(x_k)$ for all $k$ large enough.
	In particular, there exists $k$ such that $\alpha_k(s)=\iota_{g_k}(s)$ for all $s$ in the finite generating set $ S_k$, so $\phi_k(a)$ is an inner automorphism of $G_k$.
	Since $\phi_k$ induces an embedding of $A_k$ into $\out{G_k}$, we get $a=1$, a contradiction.
\end{proof}

 \autoref{res: embedding a larger finite group} is based on the following variation of a well-known result of A.Y.~Ol'shanski\u\i\ \cite[Theorem~2]{Olshanskii:1993dr}, see also \cite{Champetier:2000jx}.

\begin{theo}
\label{res: small cancellation}
	Let $H$ be a hyperbolic group and $G_1,\dots, G_k$ some non-elementary finitely generated subgroups of $H$.
	Assume that $E_H(G_i)$ is trivial for all $i\in \intvald 1k$.
	Let $N \nsub H$ be a normal subgroup containing all $G_i$'s.
	For every finite subset $P$ of $H$ there exists a quotient $\pi:H\onto \bar H$ with the following properties.
	\begin{enumerate}
		\item \label{enu: small cancellation - ker}
		$\ker\pi\subset N$
		\item \label{enu: small cancellation - hyp}
		 $\bar H$ is a hyperbolic group.
		\item \label{enu: small cancellation - 1:1}
		 The projection $\pi$ restricted to $P$ is one-to-one.
		\item \label{enu: small cancellation - onto}
		 $\pi(G_1)=\pi(G_2)=\dots=\pi(G_k)$ and this group is a non-elementary subgroup of $\bar H$.
		\item \label{enu: small cancellation - prop5} 
		For each finite subgroup $\bar F<\bar H$ there exists a finite subgroup $F<H$ such that $\pi$ maps $F$ isomorphically onto $\bar F$.
	\end{enumerate}
\end{theo}

\begin{proof}
  If $N=H$ and $G_k=H$, Assertions \ref{enu: small cancellation - ker}--\ref{enu: small cancellation - onto} as well as \ref{enu: small cancellation - prop5}   for finite cyclic groups 
are formal consequences of \cite[Theorem~2]{Olshanskii:1993dr}.
In general, \autoref{res: small cancellation} is not a formal consequence of \cite[Theorem~2]{Olshanskii:1993dr} but can be proved in exactly the same way, with minor modifications. 
We explain here the required adjustments.

In the statement of \cite[Theorem~2]{Olshanskii:1993dr}, there is no normal subgroup $N$, and the statement says that one can ensure that $\pi(G_i)=\bar H$.
The proof goes as follows. One adjoins to $H$ some relations of the form $s_\ell=w_{\ell,j}(g_{j,1},\dots,g_{j,k_j})$  where $S=\{s_1,\dots, s_n\}$ is a generating set of $H$, $S_j=\{g_{j,1},\dots, g_{j,k_j}\}$ is a generating set of $G_j$, and $w_{\ell,j}$ a word over the alphabet $S_j$ (see (16) p.~386 in \cite{Olshanskii:1993dr}
for the explicit formula for $w_{\ell,j}$, where $X_{i0}$ in \cite{Olshanskii:1993dr} corresponds to the generator $s_\ell$ in our setting, and $X_{i1},\dots,X_{ik}$
and $W_i$ are in our group $G_i$ as explained p.~403 of \cite{Olshanskii:1993dr}).
The hypothesis $E_H(G_i)=E_H(H)=\{1\}$ is used to prove that this set of relations satisfies a small cancellation assumption (this is \cite[Lemma~4.2]{Olshanskii:1993dr} where a weaker hypothesis is made).
On the other hand, the fact that $S$ generates $H$ is only used to ensure that $\pi(G_i)=\bar H$.

In our setting, we adjoin to $H$ some relations of the form $g_{i,\ell}=w_{i,\ell,j}(g_{j,1},\dots,g_{j,k_j})$.
Our assumption $E_H(G_i)=\{1\}$ guarantees that $w_{i,\ell,j}$ can be chosen in such a way that this set of relations satisfies a small cancellation condition \cite[Lemma~4.2]{Olshanskii:1993dr}. 
The quotient is hyperbolic by \cite[Lemma~6.7]{Olshanskii:1993dr}.
Such relations are obviously in $N$ and ensure that $\pi(G_i)=\pi(G_j)$.

\medskip
Finally, in \cite[Theorem~2]{Olshanskii:1993dr}, Assertion~\ref{enu: small cancellation - prop5} is claimed only for finite cyclic groups.
The argument actually works for all finite groups. 
One can also refer to \cite[Proposition~6.12]{Coulon:2014fr} where this assertion is proved for small cancellation quotients of hyperbolic groups as soon as the adjoint relations are not proper powers which is the case in our setting.
\end{proof}

\begin{proof}[Proof of \autoref{res: embedding a larger finite group}]
If $j$ is surjective, then we can take $\bar G = G$ and $\psi = \phi \circ j^{-1}$.
Hence we may assume that $j(A)$ is a proper subgroup of $B$.
For simplicity we will omit the map $j$ and assume that $A$ is a subgroup of $B$.

\medskip
To apply \autoref{res: small cancellation} above, consider the group $H=(\sdp[\phi] GA) *_A B$.
Since $\sdp[\phi] GA$ contains $G$ with finite index, it is hyperbolic, and so is $H$ by \cite{BesFei92}.
Let $N$ be the normal subgroup of $H$ generated by $G$.
We take for $G_i$ the family of conjugates of $G$ by $B$.
Clearly, the groups $G_i$ are non-elementary. 
Let us check that $E_H(G)=\{1\}$.
This will imply in particular that $E_H(G_i)=\{1\}$ for all $i$.
By definition, some finite index subgroup $G'<G$ commutes with $E_H(G)$.
Since $G'$ fixes a unique point in the Bass-Serre tree of the amalgamated product defining $H$, $E_H(G)$ has to fix this point.
So $E_H(G)$ is a subgroup of $\sdp[\phi] GA$, and $E_H(G)=E_{\sdp[\phi] GA}(G)$, which is trivial by assumption.

\medskip
Let us define the set $P$.
Let $S$ be a generating set of $G$.
Let $P_0$ be the ball of radius $r$ in $G$ (with respect to the word metric relative to $S$).
Let $F_1,\dots, F_k$ be representatives of the conjugacy classes of all finite subgroups of $H$.
For every $i$ we write $P_i$ for the following finite set
\begin{displaymath}
	P_i = \set{sus^{-1}v}{s \in S, \ u,v\in F_i}
\end{displaymath}
We choose for $P$ the union $P=P_0\cup P_1\cup\dots\cup P_k$.

\medskip
We  now apply \autoref{res: small cancellation} and  get a hyperbolic quotient $\pi:H\ra \bar H$ in which all $B$-conjugates of $G$ have the same image.
Define $\bar G$ to be the image of $G$ in $\bar H$.
Since $\bar G$ has finite index in $\bar H$, it is hyperbolic.
Since $\pi$ is injective in restriction to $P_0$, the injectivity radius of the projection $G \rightarrow \bar G$ is at least $r$. 
Since $\bar G$ is normalized by the image of $B$, it is a normal subgroup of $\bar H$.
Let $\psi \colon B \rightarrow \aut {\bar G}$ the morphism corresponding to the action of $B$ on $\bar G$ by conjugation.
Then for each $a\in A$, the diagram in Assertion~\ref{enu: embedding a larger finite group - it_cd} is clearly commutative.
Recall that the kernel of $\pi$ is contained in $N$ (the normal subgroup of $H$ generated by $G$).
Note also that $B$ embeds in $H/N\simeq \bar H/\bar G$. 
In particular, $B$ embeds in $\bar H$ and $B\cap \bar G=\{1\}$,
which implies that $\bar H$ is isomorphic to $\sdp[\psi]{\bar G}{B}$. 

\medskip 
Let us check that $E_{\bar H}(\bar G)=\{1\}$. Since $\bar G$ is non-elementary, $E_{\bar H}(\bar G)$ is a finite subgroup of $\bar H$.
By \autoref{res: small cancellation}~\ref{enu: small cancellation - prop5}, there exists a finite group $F$ that maps isomorphically onto $E_{\bar H}(\bar G)$ under $\pi$. 
Hence there exist $h \in H$ together with an index $i$ such that $F=h^{-1} F_i h$. 
Since $\bar G$ is normal in $\bar H$ we observe that
\begin{displaymath}
	\pi\left(F_i\right)=
	\pi(h)E_{\bar H}\left(\bar G\right) \pi(h)^{-1}
	=E_{\bar H}\left(\pi(h) \bar G \pi(h)^{-1}\right)
	=E_{\bar H}\left(\bar G\right)
\end{displaymath}
Let $u \in F_i$ and $s \in S$.
As we recalled before $E_{\bar H}(\bar G)$ is normalized by $\bar G$.
Hence there exists $v \in F_i$ such that $\pi(sus^{-1}) = \pi(v)$.
It follows that $sus^{-1}v^{-1}$ is an element of $P_i$ in the kernel of $\pi$, thus $sus^{-1} = v$.
Consequently $F_i$ is a finite subgroup of $H$ normalized by $G$.
Our assumption implies that $F_i$ is trivial, and so is $E_{\bar H}(\bar G)$.

 \medskip
There remains to check that $B$ embeds in $\out{\bar G}$.
Let $b \in B$ such that $\psi(b)$ is an inner automorphism.
There exists $g \in G$ such that $\pi( b g)$ centralizes $\bar G$.
In particular, $\pi(bg)$ belongs to $E_{\bar H}(\bar G)$ which is trivial, so $bg\in \ker\pi$. 
Recall that $\ker \pi$ and $G$ are contained in $N$, hence so is $b$. Since $B$ embeds in $H/N$, $b$ has to be trivial.
\end{proof}

\begin{rema}
In the previous construction we have some freedom to choose at each step the injectivity radius of the map $G_k \rightarrow G_{k+1}$.
The same procedure can be used to exhibit a lacunary group such that 
\begin{displaymath}
	\lim_{k \rightarrow + \infty} \frac{r_k}{\delta_k^3} = \infty.
\end{displaymath}
In particular this group will satisfies the assumptions of \autoref{res: large gaps} and \autoref{res: subgroup vs displacement}.
\end{rema}

\begin{rema}
	Elaborating on the ideas of Bumagin and Wise \cite{Bumagin:2005fr} one could probably modify this construction and exhibit a lacunary hyperbolic group $Q$ such that $A = \out Q$.
\end{rema}


\todos

\end{document}